\def\@cite#1#2{{\m@th\upshape\bfseries%
[{#1\if@tempswa{\m@th\upshape\mdseries, #2}\fi}]}}
\theoremstyle{plain}
\newtheorem{thm}{Theorem}[section]
\newtheorem{cor}[thm]{Corollary}
\newtheorem{lem}[thm]{Lemma}
\theoremstyle{definition}
\newtheorem{eg}[thm]{Example}
\newcommand{\Fn}{\bF_n^+}
\newcommand{\Fock}{\ell^2(\Fn)}
\newcommand{\lip}{\langle}
\newcommand{\rip}{\rangle}
\newcommand{\ip}[1]{\left\lip #1 \right\rip}
\newcommand{\bip}[1]{\big\lip #1 \big\rip}
\newcommand{\mt}{\varnothing}
\newcommand{\ol}{\overline}
\newcommand{\upminus}{\raise.9ex\hbox{-\!}}
\newcommand{\sot}{\textsc{sot}}
\newcommand{\wot}{\textsc{wot}}
\DeclareMathOperator*{\sotstarlim}{\textsc{sot}*--lim}
\newcommand{\sotsum}{\textsc{sot--}\!\!\sum}
\newcommand{\bB}{{\mathbb{B}}}
\newcommand{\bC}{{\mathbb{C}}}
\newcommand{\bD}{{\mathbb{D}}}
\newcommand{\bF}{{\mathbb{F}}}
 \newcommand{\B}{{\mathcal{B}}}
 \newcommand{\C}{{\mathcal{C}}}
\renewcommand{\H}{{\mathcal{H}}}
\renewcommand{\P}{{\mathcal{P}}}
 \newcommand{\U}{{\mathcal{U}}}
\newcommand{\ep}{\varepsilon}
\renewcommand{\phi}{\varphi}
\newcommand{\upchi}{{\raise.35ex\hbox{$\chi$}}}
\newcommand{\fA}{{\mathfrak{A}}}
\newcommand{\fJ}{{\mathfrak{J}}}
\newcommand{\fK}{{\mathfrak{K}}}
\newcommand{\fL}{{\mathfrak{L}}}
\newcommand{\fM}{{\mathfrak{M}}}
\newcommand{\fS}{{\mathfrak{S}}}
\newcommand{\fW}{{\mathfrak{W}}}
\newcommand{\AD}{\mathrm{A}(\mathbb{D})}
\newcommand{\linf}{\ell^\infty }
\newcommand{\lone}{\ell^1}
\newcommand{\ltwo}{\ell^2}
\newcommand{\AND}{\text{ and }}
\newcommand{\qand}{\quad\text{and}\quad}
\newcommand{\qforal}{\quad\text{for all}\quad}
\newcommand{\dist}{\operatorname{dist}}
\newcommand{\id}{\operatorname{id}}
\newcommand{\re}{\operatorname{Re}}
\newcommand{\spn}{\operatorname{span}}
\begin{document}

%%%%%%%%%%%%%%%%%%%%%%%%%%%%%%%%%%%%%%%%%%%
\title{Operator Algebras with Unique Preduals}
%\thanks{Printed on \today.}
%
\author[K.R.Davidson]{Kenneth R. Davidson}
\thanks{First author partially supported by an NSERC grant}
\address{Pure Math.\ Dept.\\U. Waterloo\\Waterloo, ON\;
N2L--3G1\\CANADA}
\email{krdavids@uwaterloo.ca}
\author[A.Wright]{Alex~Wright}
\address{Pure Math.\ Dept.\\U. Waterloo\\Waterloo, ON\;
N2L--3G1\\CANADA}
\email{alexmwright@gmail.com}
\subjclass[2000]{47L50, 46B04, 47L35}
\keywords{unique predual, free semigroup algebras, CSL algebras}
\date{}

\begin{abstract}
We show that every free semigroup algebras has a (strongly) unique Banach space predual. 
We also provide a new simpler proof that a weak-$*$ closed unital operator operator algebra
containing a weak-$*$ dense subalgebra of compact operators has a unique
Banach space predual. 
\end{abstract}
\maketitle

%%%%%%%%%%%%%%%%%%%%%%%%%%%%%%%%%%%%%%%%%%%
\section{Introduction}\label{S:intro}

A famous theorem of Sakai \cite{Sakai} showed that C*-algebras which are dual spaces are
von Neumann algebras, and the techniques showed in addition that the predual of a von Neumann algebra is unique (up to isometric isomorphism).
This generalized a result of Grothendieck \cite{Gr} that $L^\infty(\mu)$ has a unique predual.
Ando \cite{Ando} showed that the algebra $H^\infty$ of bounded analytic functions on
the unit disk also has a unique predual.  More recently, 
Ruan \cite{Ruan} showed that an operator algebra with a weak-$*$  dense subalgebra
of compact operators has a unique operator space predual.
He points out that some general Banach space methods of Godefroy \cite{Go,GL} in fact
imply that such algebras have a unique Banach space predual.
Also, Effros, Ozawa and Ruan \cite{EOR} have shown that W*TROs
(corners of von Neumann algebras) have unique preduals as well.

In this note, we show that every free semigroup operator algebra has a unique predual.
A free semigroup algebra is the \wot-closed unital algebra generated by $n$ isometries
with pairwise orthogonal ranges.  The prototypes are the non-commutative analytic
Toeplitz algebras, $\fL_n$, given by the left regular representation of the free semigroup $\Fn$
of words in an alphabet of $n$ letters \cite{Pop_fact,Pop_beur,DP1}.
The case $n=1$ is just $H^\infty$, which follows from Ando's Theorem.
Our proof deals with $n \ge 2$.
Once the result is established for $\fL_n$, the general case follows from
the Structure Theorem for free semigroup algebras \cite{DKP} and the result
mentioned above of Effros, Ozawa and Ruan.

It is an open problem whether $H^\infty(\Omega)$ has a unique predual when $\Omega$
is a domain in $\bC^n$, even for the bidisk $\bD^2$ or the unit ball $\bB_2$ of $\bC^2$.
In a number of ways, the algebras $\fL_n$ have proven to be more tractable that their
commutative counterparts when it comes to finding analogues of classical results
for $H^\infty$ in dimension one.  This predual result is another case in point.

A weak-$*$ closed operator algebra $\fA$ is called \textit{local} if the ideal of compact
operators in $\fA$ is weak-$*$ dense in $\fA$.
We also provide a new simpler proof that a local operator algebra has a unique predual
which is inspired by Ando's proof for $H^\infty$.
For $\B(\H)$, the manipulations involving approximate identities can be omitted.
So this provides an alternative to invoking Sakai's Theorem in this case.
However we also provide another very simple proof for $\B(\H)$ which relies
neither on positivity (like Sakai) nor on the density of the compacts.

In Banach space theory, there is an extensive literature on the topic of unique preduals.
We refer the reader to a nice survey paper of Godefroy \cite{Go}.
For example, if $X$ is a dual space which does not contain an isomorphic copy of $\ell^1$,
then the predual is unique.
Also smoothness conditions on the predual $X_*$ such as a locally uniformly convex norm
or the Radon--Nikodym property imply that it is the unique predual of $X$.
These properties do not often apply to algebras of operators.

Another observation due to Godefroy and Talagrand \cite{GT} is that Banach spaces
with property (X) have unique preduals.  
This technical condition will be defined in the next section. 
In the proofs of Sakai and Ando mentioned above, 
it is a property very close to this which is exploited to establish uniqueness.  
It implies, for example, that if $X$ is an M-ideal in $X^{**}$,
then $X^*$ is the unique predual of $X^{**}$ \cite[p.148]{HWW}.
This is the case for operator algebras with a weak-$*$ dense ideal of compact operators.
Recently, Pfitzner \cite{Pf} has has generalized this by showing that if $X_*$ is an L-summand 
in $X^*$, then $X_*$ has property (X) and so is the unique predual of $X$.
Another basic class with unique predual are the spaces of operators $\B(X,Y)$
where $X$ and $Y$ are Banach spaces with the Radon--Nikodym property
due to Godefroy and Saphar \cite{GS}.
This includes spaces which are separable dual spaces, and 
all reflexive Banach spaces.

Nevertheless, in operator algebras, the literature on unique preduals is rather limited,
and the main results have all been mentioned above.

A Banach space $X$ has a strongly unique predual if there is a unique subspace
$E$ of $X^*$ for which $X=E^*$.
All known examples of Banach spaces with unique predual actually have
a strongly unique predual \cite{Go}.
This is the case in our examples as well.

We thank Gilles Godefroy for some helpful comments on this paper.

%%%%%%%%%%%%%%%%%%%%%%%%%%%%%%%%%%%%%%%%%%%
\section{Background}\label{S:back}

If $X$ is a dual Banach space, then any predual $E$ sits in a canonical manner
as a subspace of the dual $X^*$.
Let $\sigma(X,E)$ denote the weak-$*$ topology on $X$ induced from $E$.
$E$ has two evident properties which are characteristic:
\begin{enumerate}
\item $E$ norms $X$: $\sup\{ |\phi(x)| : \phi \in E,\, \|\phi\| \le 1 \} = \|x\|$.
\item The closed unit ball of $X$ is compact in the $\sigma(X,E)$ topology.
\end{enumerate}
The latter property is a consequence of the Banach-Aloaglu Theorem.

Conversely, if $E$ is a subspace of $X^*$ with these properties,
then $X$ sits isometrically as a subspace of $E^*$ by (1).
By (2), the closed ball $\ol{b_1(X)} := \ol{b_1(0)}$ of $X$ is weak-$*$ compact in $E^*$.
Therefore by the Krein--Smulyan Theorem, $X$ is weak-$*$ closed in $E^*$.
However, as $E$ is a subspace of $X^*$, the annihilator of $X$ in $E$ in $\{0\}$.
Hence $X=E^*$.
Thus we see that these two properties characterize the preduals of $X$.

In any weak-$*$ topology on $X$, closed balls $\ol{b_r(x)}$ are compact
for $x\in X$ and $r\ge0$.
Also, addition is always weak-$*$ continuous.  So finite sums of
closed balls are also \textit{universally weak-$*$ compact}.
This can sometimes be used to show that certain functionals are
\textit{universally weak-$*$ continuous}, meaning that they belong
to every predual of $X$.

\begin{eg}
It is very easy to see that $\linf$ has a unique predual, namely $\lone$.
Let $e_n$ denote the sequence with a $1$ in the $n$th coordinate, and $0$ elsewhere.
And let $\delta_n$ be the element of $\lone$ which evaluates the $n$th coordinate.
Observe that 
\[  \ol{b_1(e_n)} \cap \ol{b_1(-e_n)} = \ol{b_1(\ker\delta_n))}. \]
Hence $\ol{b_1(\ker\delta_n))}$ is universally weak-$*$ compact.
By the Krein--Smulyan Theorem, $\ker\delta_n$ is universally weak-$*$ closed.
So $\delta_n$ lies in every predual of $\linf$.  But these functionals span $\lone$,
and hence it is the unique predual.
\end{eg}

\begin{eg}
A similar, but somewhat more involved, argument shows that $\B(\H)$ has a unique predual.
Let $\H$ be an infinite dimensional Hilbert space, and consider unit vectors $x,y\in\H$.
Write $xy^*$ for the rank one operator $xy^*(z) = \ip{z,y}x$.
Observe that 
\begin{align*}
 \C_x^y &:=\ol{b_1(xy^*)} \cap \ol{b_1(-xy^*)} \\&= \{T \in \B(\H) : Ty=T^*x=0 \AND \|T\|\le 1 \}.
\end{align*}
Indeed, $Ty \in \ol{b_1(x)} \cap \ol{b_1(-x)} = \{0\}$ and similarly, $T^*x=0$;
so the result follows.
Pick a unit vector $z$ orthogonal to both $x,y$.
Then a simple calculation shows that
\[
 (\C_x^y + \C_z^y + \C_x^z) \cap \ol{b_1(\B(\H))} = 
 \{T \in \B(\H) : \ip{Ty,x}=0 \AND \|T\|\le 1 \}.
\]
Arguing as before, the functional $(yx^*)(T) = \ip{Ty,x}$ is universally weak-$*$ continuous.
But these functionals span the trace class operators $\fS_1$, the standard predual of $\B(\H)$.
Therefore $\fS_1$ is the unique predual of $\B(\H)$.
\end{eg}

\begin{eg}
There are \wot-closed operator algebras which do not have unique preduals.
The basic point is that being an operator algebra is not restrictive.
If we put any \wot-closed subspace of $\B(\H)$ in the $1,2$ entry of $2\times2$
matrices over $\B(\H)$, then we have an operator algebra.  Adding in the scalar
operators (on the diagonal) will not essentially change the Banach space
characteristics, but will yield a unital algebra.
In particular, let's put $\linf$ into $\B(\H)$ as the diagonal operators, and place it
in the $1,2$ entry.  Every dual space $X^*$ with separable predual can be
isometrically imbedded into $\linf$ as a weak-$*$ closed subspace.
The weak-$*$ and \wot-topologies coincide on $\linf$.  So this procedure
yields a \wot-closed algebra.  If we do this for $X^* = \lone$,
we obtain the desired example.
\end{eg}

A series $(x_n)$ in a Banach space $X$ is universally weakly Cauchy if
$\sum_{n\ge1}| \phi(x_n) | < \infty$ for every $\phi\in X^*$.
If $X=E^*$, define $C(E)$ to be the set of all functionals $\phi\in X^*=E^{**}$
with the property that for every universally weakly Cauchy series $(x_n)$ in $X$,
\[ \phi \big( w^*\!\!-\!\lim \sum_{i=1}^n x_i \big) = \sum_{i=1}^\infty \phi(x_n) .\]
Evidently this contains $E$.  But Godefroy and Talagrand \cite{GT} show that
$C(E)$ contains every predual of $X$, and this space does not depend on the
choice of the predual $E$.
They say that $X$ has property (X) if $C(E)=E$.  Evidently this immediately
implies that $X$ has a unique predual.

A similar property was established by Sakai for a von Neumann algebra $\fM$.
He shows that a state $\phi$ on $\fM$ belongs to $\fM_*$ if and only if it satisfies: 
whenever $(P_n)$ are pairwise orthogonal projections in $\fM$ such that 
$\sotsum P_n = I$, then $\sum \phi(P_n) = 1$.
In Section~\ref{S:Compacts}, we use a similar property to establish unique preduals
for algebras with sufficiently many compact operators.

\pagebreak[4]
%%%%%%%%%%%%%%%%%%%%%%%%%%%%%%%%%%%%%%%%
\section{Free Semi-group Algebras}\label{S:fsg}

A free semigroup algebra is a \wot-closed unital operator algebra
generated by $n$ isometries $S_1,\dots,S_n$ with pairwise orthogonal range.
We allow $n=\infty$.
The prototype is obtained from the left regular representation of the free
semigroup $\Fn$ of all words in an alphabet of $n$ letters.
The operators $L_v$, for $v\in\Fn$, act on the Fock space $\ltwo(\Fn)$,
with orthonormal basis $\{\xi_w : w \in \Fn \}$, by  $L_v \xi_w = \xi_{vw}$.
The algebra $\fL_n$ generated by $L_1,\dots,L_n$ is called the
\textit{noncommutative analytic Toeplitz algebra} because the case $n=1$
yields the analytic Toeplitz algebra isomoetrically isomorphic to $H^\infty$,
and because these algebras share many similar properties 
(see \cite{Pop_fact,Pop_beur,DP1,DP2}).
The standard predual of $\fL_n$ is the space $\fL_{n*}$ of all weak-$*$ continuous
linear functionals on $\fL_n$.
See \cite{D_surv} for an overview of these algebras.

Let $|w|$ denote the length of the word $w$.
Note that the operators $\{ L_w : |w| = k \}$ are isometries with pairwise orthogonal ranges.
Thus $\spn \{ L_w : |w| = k \}$ is isometric to a Hilbert space.

An element $A \in \fL_n$ is determined by $A\xi_\mt = \sum a_w L_w$.
We call the series $\sum a_w L_w$ the Fourier series of $A$.
As in classical harmonic analysis, this series need not converge.
However the Cesaro means do converge in the strong operator topology to $A$ \cite{DP1}.
The functional $\phi_w(A) = \ip{A\xi_\mt,\xi_w}$ reads off the $w$-th Fourier coefficient,
and it is evidently weak-$*$ continuous.
The ideal $\fL_n^0 = \ker\phi_\mt$ consists of all elements with 0 constant term, and
is the \wot-closed ideal generated by $L_1,\dots,L_n$.
The powers $(\fL_n^0)^k$ are the ideals of elements for which $a_w=0$ for all $|w| < k$.

We first deal with the noncommutative analytic Toeplitz algebras, $\fL_n$, for $n\ge2$.
This first lemma is motivated by the fact that for a vector $v$ in a Hilbert space $\H$, 
$\bigcap_{\lambda\in\bC}\ol{b_{\sqrt{1+|\lambda|^2}}(\lambda v)}=\bC v^\perp\cap\ol{b_1}$.

%%%%%%%%%%%%%%%%%%%
\begin{lem}\label{P:ideal_C_w}
For $n\ge2$ and $k \ge 0$, the ideal $(\fL_n^0)^k$
is universally weak-$*$ closed in $\fL_n$.
\end{lem}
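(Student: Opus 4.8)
The strategy is to realize $(\fL_n^0)^k$ as an intersection of universally weak-$*$ compact sets, so that the Krein--Smulyan Theorem (as already invoked in the $\linf$ and $\B(\H)$ examples) forces it to be universally weak-$*$ closed. The motivating computation is exactly the one stated before the lemma: for a single vector $v$ in a Hilbert space, $\bigcap_{\lambda\in\bC}\ol{b_{\sqrt{1+|\lambda|^2}}(\lambda v)} = \bC v^\perp \cap \ol{b_1}$. I want to run the same argument, but with ``adding a scalar multiple of $v$'' replaced by ``adding a scalar multiple of a word operator $L_w$ with $|w|=k$.''

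**Key steps.** First, fix $w$ with $|w| = k$ and consider, for each scalar $\lambda$,
\[
 \td\C_w := \bigcap_{\lambda\in\bC} \ol{b_{\sqrt{1+|\lambda|^2}}(\lambda L_w)}.
\]
Since $L_w$ is an isometry, I expect that $\td\C_w = \{A \in \fL_n : \|A\| \le 1,\ \phi_w(A) = 0\}$, i.e.\ the unit ball intersected with the kernel of the $w$-th Fourier coefficient functional; this is the free-semigroup-algebra analogue of $\bC v^\perp \cap \ol{b_1}$, and the isometry property of $L_w$ together with the orthogonality of the ranges $\{L_u \xi_\mt : |u|=k\}$ should make the inclusion ``$\subseteq$'' work coordinatewise in the Fourier expansion, while ``$\supseteq$'' is a direct norm estimate. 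Each ball $\ol{b_r(\lambda L_w)}$ is universally weak-$*$ compact (a closed ball is compact in any weak-$*$ topology), so $\td\C_w$ is universally weak-$*$ compact as an intersection of such sets; hence $\ker\phi_w$ is universally weak-$*$ closed and $\phi_w$ lies in every predual. Next, intersect over all $w$ of length $k$: since $(\fL_n^0)^k = \{A : a_w = 0 \text{ for all } |w| < k\} = \bigcap_{|w|<k}\ker\phi_w$, and each $\ker\phi_w$ is universally weak-$*$ closed, the ideal $(\fL_n^0)^k$ is universally weak-$*$ closed as a finite (or countable, if $n=\infty$ — but then one restricts to $\{|w|<k\}$ which is still finite when... actually $n=\infty$ makes $\{|w|=1\}$ infinite, so one uses that an arbitrary intersection of universally weak-$*$ closed sets is universally weak-$*$ closed) intersection. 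Actually the cleanest route: show directly that $(\fL_n^0)^k \cap \ol{b_1(\fL_n)}$ is a finite sum of translates of the $\td\C_w$'s intersected with the unit ball, mimicking the $\B(\H)$ example, to get universal weak-$*$ closedness of the ideal itself and not merely of each $\ker\phi_w$.

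**Where the hypothesis $n \ge 2$ enters, and the main obstacle.** The condition $n\ge 2$ is what makes the number of words $w$ of a given positive length at least two, which is presumably needed so that ``adding $\lambda L_w$'' genuinely sweeps out a one-dimensional direction transverse to the rest of the ball — if $n=1$ the structure degenerates to $H^\infty$ and a different (Ando-style) argument is used, as the introduction notes. The main technical obstacle I anticipate is verifying the identity for $\td\C_w$ precisely: one must check that if $\|A - \lambda L_w\| \le \sqrt{1+|\lambda|^2}$ for all $\lambda$, then $a_w = 0$. The natural move is to test against the vector $\xi_\mt$ (or $\xi_{w'}$ for suitable $w'$) and extract $\langle (A-\lambda L_w)\xi_\mt, \xi_w\rangle = a_w - \lambda$, whose modulus is bounded by $\sqrt{1+|\lambda|^2}$; but $|a_w - \lambda|^2 \le 1 + |\lambda|^2$ for all $\lambda$ forces $\re(\bar\lambda a_w) \le \tfrac12(1 - |a_w|^2)$ for all $\lambda$, hence (letting $\lambda$ run over all scalars of a fixed large modulus) $a_w = 0$. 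The subtlety is that $\|A\|$ is the operator norm on all of Fock space, not just the norm of $A\xi_\mt$, so one has to be a little careful that the one-vector estimate suffices — it does, because $\|A-\lambda L_w\| \ge \|(A-\lambda L_w)\xi_\mt\| \ge |\langle (A-\lambda L_w)\xi_\mt, \xi_w\rangle|$, and for the reverse inclusion one needs a genuine operator-norm bound on $A + (\text{something})L_w$ when $A \in \ker\phi_w \cap \ol{b_1}$, which follows from the fact that $\spn\{L_u : |u|=k\}$ is isometric to a Hilbert space (noted just before the lemma) plus a Pythagorean-type estimate using orthogonality of ranges. Once that identity is in hand the rest is a formal intersection/Krein--Smulyan argument exactly parallel to the two worked examples.
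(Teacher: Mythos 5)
Your overall framework (realize the unit ball of the ideal via intersections and sums of balls, then apply Krein--Smulyan) is the right one, but the central identity you rely on is false, and the argument collapses at that point. The set $\td\C_w=\bigcap_{\lambda\in\bC}\ol{b_{\sqrt{1+|\lambda|^2}}(\lambda L_w)}$ is \emph{not} $\ker\phi_w\cap\ol{b_1}$; it is the much smaller set $\{A\in\fL_n: \|A\|\le1 \text{ and } L_w^*A=0\}$, i.e.\ the contractions whose range is orthogonal to the range of $L_w$. Your proposed inclusion ``$\supseteq$'' fails concretely: take $A=L_{w1}$. Then $\|A\|=1$ and $\phi_w(A)=0$, but $\|A-\lambda L_w\|=\|L_w(L_1-\lambda I)\|=\|L_1-\lambda I\|\ge 1+|\lambda|>\sqrt{1+|\lambda|^2}$ for $\lambda\ne0$ (since $L_1$ is a proper isometry, its spectrum is $\ol\bD$), so $A\notin\td\C_w$. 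The heuristic $\bigcap_\lambda\ol{b_{\sqrt{1+|\lambda|^2}}(\lambda v)}=\bC v^\perp\cap\ol{b_1}$ is a statement about a single vector; at the operator level the condition you actually extract is orthogonality of \emph{ranges}, which kills $a_v$ for every $v$ that is a prefix of $w$ or has $w$ as a prefix --- far more than $a_w=0$. Consequently you cannot conclude that $\ker\phi_w$ is universally weak-$*$ closed this way, and the subsequent intersection is hopeless: $\bigcap_{|w|=k}\td\C_w=\{0\}$, because an operator in $\fL_n$ whose range is orthogonal to every $\ran L_w$ with $|w|=k$ must vanish. (The universal continuity of the $\phi_w$ is in fact a \emph{corollary} of the lemma, obtained afterwards by showing the quotients $(\fL_n^0)^k/(\fL_n^0)^{k+1}$ are Hilbert spaces; it is not available as an input.)

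The missing idea is how to recover the full ball of the ideal from these too-small compact sets, and this is exactly where $n\ge2$ enters --- not for the transversality reason you guess. One intersects only over words with a \emph{fixed last letter} $i$, setting $D_{k,i}=\bigcap_{|w|=k-1}\td\C_{wi}$; this is still universally weak-$*$ compact but each $D_{k,i}$ individually misses part of the ideal (it forces $a_v=0$ whenever $v=wiv'$ with $|w|=k-1$). One then shows $\ol{b_1(\fL_n)}\cap(D_{k,1}+D_{k,2})=\ol{b_1((\fL_n^0)^k)}$, using the sum (which is still universally weak-$*$ compact since addition is weak-$*$ continuous) to fill in what each piece omits. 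The nontrivial inclusion requires the factorization $A=\sum_{|w|=k}L_wA_w$ of an element of $(\fL_n^0)^k$ as a row isometry times a column contraction (\cite[Lemma~2.6]{DKP}); splitting the sum according to whether the last letter of $w$ is $1$ or not produces contractions $B_2\in D_{k,1}$... rather, contractions $B_1\in D_{k,1}$ and $B_2\in D_{k,2}$ with $A=B_1+B_2$. Your closing remark that one should ``show directly that $(\fL_n^0)^k\cap\ol{b_1(\fL_n)}$ is a finite sum of the $\td\C_w$'s intersected with the unit ball'' points in the right direction, but without the correct identification of $\td\C_w$ and without the factorization lemma the proof is not there.
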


\begin{proof}
Fix a word $w\in\Fn$.
Define $C_w=\bigcap_{\lambda\in\bC}\ol{b_{\sqrt{1+|\lambda|^2}}(\lambda L_w)}.$
By the remarks in the previous section, this is universally weak-$*$ compact.
We will first establish that
\[ C_w = \{ A\in\fL_n : \|A\| \le 1 \AND L_w^* A = 0 \} .\]
That is, a contraction $A$ belongs to $\C_w$ if and only if
$A$ and $L_w$ have orthogonal ranges. 
Observe that a contraction $A$ lies in $C_w$ if and only if for all $\lambda\in\bC$ and
all $\xi\in\Fock$ with $\|\xi\|=1$,
\begin{align*}
   1 + |\lambda|^2 &\ge \| \lambda L_w \xi  - A\xi \|^2 \\
   &= \| \lambda L_w\xi \|^2 - 2\re \lip A\xi, \lambda L_w \xi \rip +\|A\xi\|^2\\
   &= |\lambda|^2 - 2\re\lip \ol{\lambda} L_w^*A \xi, \xi\rip + \|A\xi\|^2 .
\end{align*}
If $L_w^* A = 0$ this inequality is clearly satisfied, so $A$ belongs to $C_w$.

Conversely, if $A\in C_w$, by picking the sign of $\lambda$ appropriately, we obtain that
\[
 1 + |\lambda|^2 \ge |\lambda|^2 + 2|\lambda| |\lip L_w^* A\xi,\xi\rip| + \|A\xi\|^2 .
\]
Letting $|\lambda|$ tend to $\infty$, we see that $\lip L_w^* A\xi,\xi\rip = 0$ for all $\xi$.
By the polarization identity, $L_w^* A = 0$.

It follows that $D_{k,i} := \bigcap_{|w|=k-1} C_{wi}$ is universally weak-$*$
compact for any $i$.
If $A \in \fL_n$ has a Fourier series $A \sim \sum  a_vL_v$ and lies in $D_{k,i}$,
then we claim that $a_v = 0$ if $|v| < k$ or if $v$ has the form $wiv'$ for $|w|=k-1$.
Indeed, if $|v|<k$, choose any word $v'$, possibly empty, so that $|vv'i|=k$.
Then since the range of $A$ is orthogonal to the range of $L_{vv'i}$,
\[
 0 = \ip{A \xi_{v'i}, L_{vv'i}\xi_\mt} = \bip{\sum a_v \xi_{vv'i}, \xi_{vv'i}} = a_v .
\]
Similarly if $v=wiv'$ for $|w|=k-1$, then
since the range of $A$ is orthogonal to the range of $L_{wi}$,
\[
 0 = \ip{A \xi_\mt, L_{wi}\xi_{v'}} = \bip{\sum a_v \xi_v, \xi_v} = a_v .
\]
Conversely, it is evident that any such $A$ has range orthogonal to all
ranges $L_{wi}$ for $|w|=k-1$.  So if it is a contraction, it will lie in $D_{k,i}$.

Since addition is always weak-$*$ continuous, we obtain that
\[
 D_k = \ol{b_1(\fL_n)} \cap (D_{k,1}+D_{k,2})
\]
is also universally weak-$*$ compact.  We claim that $D_k = \ol{b_1(( \fL_n^0)^k)}$.
For $A$ to lie in either $D_{k,i}$, the Fourier coefficients $a_v=0$ for $|v|<k$.
So this persists in the sum, and hence $D_k$ is contained in $\ol{b_1( (\fL_n^0)^k)}$.

Conversely, if $A\in\ol{b_1( (\fL_n^0)^k)}$, by \cite[Lemma~2.6]{DKP}
there is a factorization $A=\sum_{|w|=k} L_wA_w$ where $A_w \in \fL_n$.
Moreover, this factors as $LC$ where $L$ is the row operator with coefficients
$L_w$ for $|w|=k$ and $C$ is the column operator with coefficients $A_w$.
Since $L$ is an isometry, we have $\|C\| = \|A\| \le 1$.
Define
\[
 B_1 = \sum_{i\ge 2} \sum_{|w|=k-1} L_{wi}A_{wi} \qand B_2 = \sum_{|w|=k-1} L_{w1}A_{w1} .
\]
Then it follows that both $B_i$ are contractions in $(\fL_n^0)^k$.
Moreover, $B_1 \in D_{k,1}$ and $B_2 \in D_{k,2}$.
Therefore $A = B_1+B_2$ belongs to $D_k$ as claimed.

We have shown that $\ol{b_1(( \fL_n^0)^k)}$ is universally weak-$*$ compact.
Thus by the Krein--Smulyan Theorem, $(\fL_n^0)^k$ is universally weak-$*$ closed.
\end{proof}

%%%%%%%%%%%%%%%%%%%
\begin{cor} \label{C:Fourier}
The functionals $\phi_w$, for $w \in \Fn$,  are universally weak-$*$ continuous for all $n\ge2$.
\end{cor}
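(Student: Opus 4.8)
The plan is to deduce Corollary~\ref{C:Fourier} from Lemma~\ref{P:ideal_C_w} by a standard intersection-of-universally-weak-$*$-closed-sets argument, exactly mirroring the derivation of universal weak-$*$ continuity of the Fourier coefficient functionals on $\linf$ or of $(yx^*)(T)=\ip{Ty,x}$ on $\B(\H)$ in the examples of Section~\ref{S:back}. The key observation is that each $\phi_w$ reads off a single Fourier coefficient, so its kernel is cut out of $\fL_n$ by the two nested universally weak-$*$ closed ideals supplied by the lemma.

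First I would fix $w\in\Fn$ with $k=|w|$ and note that by Lemma~\ref{P:ideal_C_w}, both $(\fL_n^0)^k$ and $(\fL_n^0)^{k+1}$ are universally weak-$*$ closed in $\fL_n$. An element $A\sim\sum_v a_vL_v$ lies in $(\fL_n^0)^k$ precisely when $a_v=0$ for all $|v|<k$, and in $(\fL_n^0)^{k+1}$ precisely when $a_v=0$ for all $|v|\le k$. Next I would exhibit $\ker\phi_w$ as a sum of a universally weak-$*$ closed ideal and a one-dimensional (hence universally weak-$*$ closed) subspace: writing $P$ for the projection onto the span of $\{L_v:|v|=k,\,v\ne w\}$, one has $\ker\phi_w = (\fL_n^0)^{k+1}+\spn\{L_v:|v|=k,\ v\ne w\}\cap(\text{something})$; more cleanly, since $A-a_wL_w$ has vanishing $w$-coefficient, $\ker\phi_w$ is the preimage under the (universally weak-$*$ continuous) translation-and-scaling maps, so I would instead intersect balls directly: $\ol{b_r(\lambda L_w)}$ for varying $\lambda\in\bC$ and $r=\sqrt{r_0^2+|\lambda|^2}$ are universally weak-$*$ compact, and combined with $(\fL_n^0)^{k+1}$ and a ball, they cut out $\{A:\phi_w(A)=0,\ \|A\|\le r_0\}$ up to the lower-order coefficients, which in turn are already known to be killed by the $C_v$'s from the lemma's proof. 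Then the Krein--Smulyan Theorem promotes universal weak-$*$ compactness of all closed balls of $\ker\phi_w$ to universal weak-$*$ closedness of $\ker\phi_w$, whence $\phi_w$ belongs to every predual of $\fL_n$.

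I expect the main (though still minor) obstacle to be bookkeeping: expressing $\ker\phi_w$, or rather all its closed balls $\ol{b_{r_0}(A_0)}\cap\ker\phi_w$, as a finite sum of universally weak-$*$ compact sets, since $\phi_w$ isolates the $w$-coefficient among the many coefficients of a given length $k$. The cleanest route is to use $(\fL_n^0)^k/(\fL_n^0)^{k+1}\cong\spn\{L_v:|v|=k\}$, which is isometric to a Hilbert space (as remarked before the lemma), and on a Hilbert space the kernel of a coordinate functional is handled by the vector identity quoted just before the lemma. Concretely, I would argue that modulo the universally weak-$*$ closed ideal $(\fL_n^0)^{k+1}$, the functional $\phi_w$ lives on a Hilbert space where $\bigcap_{\lambda}\ol{b_{\sqrt{1+|\lambda|^2}}(\lambda L_w)}$ equals the closed unit ball of $\ker\phi_w$, lift this back, and conclude. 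Finally, since the functionals $\{\phi_w:w\in\Fn\}$ are norm-dense-spanning for $\fL_{n*}$ and every predual of $\fL_n$ contains each $\phi_w$, any predual contains $\fL_{n*}$; combined with the reverse containment that holds for any predual, this will also yield (in the next result) that $\fL_n$ has a strongly unique predual.
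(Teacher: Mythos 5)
Your proposal assembles the right ingredients (Lemma~\ref{P:ideal_C_w} plus the Hilbert space identification of $(\fL_n^0)^k/(\fL_n^0)^{k+1}$), but the step that actually produces weak-$*$ continuity of $\phi_w$ is not carried out, and the two routes you sketch both have problems. The direct route --- exhibiting the closed balls of $\ker\phi_w$ as finite sums of universally weak-$*$ compact sets inside $\fL_n$ --- is exactly the bookkeeping you flag as an obstacle, and it is a genuine one: the only sets the lemma's technique provides are the $C_v$'s, and membership in $C_v$ forces $L_v^*A=0$, which kills \emph{all} coefficients $a_{vv'}$, not just the single coefficient $a_w$ that $\phi_w$ reads off. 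Unlike the $\B(\H)$ example, where one has the whole algebra to play with when decomposing a contraction with one vanishing matrix entry, in $\fL_n$ no decomposition isolating a single length-$k$ coefficient is exhibited, and it is not clear one exists with the required norm control. Your ``cleanest route'' then passes to the quotient by $(\fL_n^0)^{k+1}$, but here you conflate $\fL_n/(\fL_n^0)^{k+1}$ (where $\phi_w$ actually lives, and which is \emph{not} a Hilbert space --- it contains the identity and all lower-order terms) with the sub-quotient $(\fL_n^0)^k/(\fL_n^0)^{k+1}$ (which is). The ball-intersection identity $\bigcap_\lambda \ol{b_{\sqrt{1+|\lambda|^2}}(\lambda v)}=\bC v^\perp\cap\ol{b_1}$ needs the parallelogram law, so it is only valid on the Hilbert space sub-quotient; running it there would at best give weak-$*$ continuity of $\phi_w$ restricted to $(\fL_n^0)^k$, and extending that to continuity on all of $\fL_n$ requires knowing that functionals vanishing on $(\fL_n^0)^k$ are already universally weak-$*$ continuous --- i.e., an induction on $k$ that you have not set up.

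The paper's proof closes exactly this gap by a different mechanism: since each $(\fL_n^0)^k$ is universally weak-$*$ closed, for any predual $E$ the quotients $(\fL_n^0)^k/(\fL_n^0)^{k+1}$ are dual spaces of the corresponding subquotients of $E$; being isometric to Hilbert spaces they are reflexive, and hence $\fL_n/(\fL_n^0)^{k+1}$, an iterated extension of reflexive spaces, is itself reflexive with dual equal to its predual. On a reflexive quotient \emph{every} bounded functional is automatically $\sigma$-continuous, so $\phi_w$ (which factors through this quotient) is $\sigma(\fL_n,E)$-continuous with no further ball-intersection argument needed. If you replace the second half of your sketch with this reflexivity observation, the proof goes through; as written, it does not.
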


\begin{proof}
If $\fL_n = E^*$, let $E_k = ((\fL_n^0)^k)_\perp$ be the annihilator of $(\fL_n^0)^k$ in $E$.
Since $(\fL_n^0)^k$ is $\sigma(\fL_n,E)$ closed, $(\fL_n^0)^k = E_k^\perp$
and $(\fL_n^0)^k \simeq (E/E_k)^*$.
Therefore $(\fL_n^0)^k/(\fL_n^0)^{k+1} \simeq (E_k/E_{k+1})^*$.

Now $(\fL_n^0)^k/(\fL_n^0)^{k+1}$ is isometrically isomorphic to
the subspace \linebreak\mbox{$\spn\{ L_w : |w|=k\}$.}
Indeed, the elements of this quotient have the form $\sum_{|w|=k} a_wL_w + (\fL_n^0)^{k+1}$.
So the norm is bounded above by
\[ \|\sum_{|w|=k} a_wL_w\| = \|(a_w)_{|w|=k}\|_2 .\]
On the other hand, it is bounded below by
\[
 \inf_{A\in (\fL_n^0)^{k+1}} \big\| \big( \sum_{|w|=k} a_wL_w + A \big)\xi_\mt \big\|
 = \big\| \sum_{|w|=k} a_w \xi_w  \big\|  =  \|(a_w)_{|w|=k}\|_2
\]
Hence this quotient is a Hilbert space.

Since a Hilbert space is reflexive, its dual is $E_k/E_{k+1}$.
Therefore $\fL_n/(\fL_n^0)^{k+1}$ is reflexive with dual $E/E_{k+1}$.
Since the functionals $\phi_w$ for $|w|\le k$ are continuous on this quotient,
they are all $\sigma(\fL_n,E)$ continuous.
\end{proof}

%%%%%%%%%%%%%%%%%%%
\begin{thm}\label{T:Ln_unique_predual}
   $\fL_n$ has a unique predual for $n\ge2$.
\end{thm}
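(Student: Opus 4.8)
The plan is to show that $\fL_n$ has property (X), or at least enough of it: namely, that the weak-$*$ continuous functionals are forced to lie in every predual $E$. By Corollary~\ref{C:Fourier}, the Fourier coefficient functionals $\phi_w$ already lie in every predual, and these span $\fL_{n*}$ in the norm sense precisely when applied to Cesaro-summable series; so the crux is to upgrade ``the $\phi_w$ are universally weak-$*$ continuous'' to ``the whole standard predual $\fL_{n*}$ is universally weak-$*$ continuous,'' which then gives $\fL_{n*}\subseteq E$ for every predual $E$, and hence equality since $\fL_{n*}$ is already a predual (no proper predual can contain another).

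First I would fix a predual $E$ with $\fL_n=E^*$ and let $\sigma=\sigma(\fL_n,E)$ be the associated weak-$*$ topology; we know the standard weak-$*$ topology $\sigma(\fL_n,\fL_{n*})$ is a second such topology, and the goal is $E\supseteq\fL_{n*}$. From Corollary~\ref{C:Fourier}, each $\phi_w\in E$, and moreover the argument there shows $\fL_n/(\fL_n^0)^{k+1}$ is reflexive with $\sigma$ agreeing with the quotient weak topology on it. Now take any $\psi\in\fL_{n*}$. The key classical fact about $\fL_n$ (from \cite{DP1}) is that every $A\in\fL_n$ is recovered from its Fourier series by Cesaro means $\Sigma_m(A)\to A$ in the strong operator topology, where $\Sigma_m(A)$ is a finite linear combination of the $L_w$ with $|w|\le m$; and crucially $\|\Sigma_m(A)\|\le\|A\|$, so the Cesaro means are uniformly bounded. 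I would then show: (i) $\psi$ is determined by its values on the polynomials (the span of the $L_w$), since $\psi(A)=\lim_m\psi(\Sigma_m(A))$ by weak-$*$ continuity of $\psi$ together with the fact that $\Sigma_m(A)\to A$ boundedly in the standard weak-$*$ topology; (ii) the map $A\mapsto\lim_m\psi(\Sigma_m(A))$, defined via the $\phi_w$'s, is $\sigma$-continuous on bounded sets. For (ii), one shows the Cesaro means converge to $A$ in the $\sigma$ topology as well — this follows because $\Sigma_m$ is a bounded projection onto $\fL_n/(\fL_n^0)^{m+1}\cong\spn\{L_w:|w|\le m\}$ on which $\sigma$ is the (reflexive) norm-weak topology, so $\Sigma_m\to\id$ in the point-$\sigma$ topology on bounded sets via a uniform-boundedness/density argument. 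Hence $\psi(A)=\lim_m\psi(\Sigma_m(A))$ is a $\sigma$-limit of $\sigma$-continuous functionals, bounded on bounded sets, so by Krein--Smulyan $\psi$ is $\sigma$-continuous, i.e. $\psi\in E$.

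Finally, having shown $\fL_{n*}\subseteq E$, I would invoke the general Banach-space fact recalled in Section~\ref{S:back}: if $E$ is any predual of $X=E^*$ and $F\subseteq E$ is a norming subspace whose unit ball gives $X$ a compact unit ball (equivalently, $F$ is itself a predual), then $E=F$ — because $X$ is $\sigma(E^*,F)$-closed in $F^*$ by Krein--Smulyan and separates points of $F$, forcing $F^*=X$ and thus no room for $E\supsetneq F$. Applying this with $F=\fL_{n*}$ and the arbitrary predual $E$ yields $E=\fL_{n*}$.

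\emph{Main obstacle.} The delicate point is step (ii): proving that the Cesaro means converge to $A$ in the \emph{arbitrary} weak-$*$ topology $\sigma(\fL_n,E)$, not just in the standard one. The reflexivity of the finite-level quotients (from Corollary~\ref{C:Fourier}) pins down $\sigma$ on each $\spn\{L_w:|w|\le m\}$, but one must patch these together uniformly — controlling the ``tails'' $A-\Sigma_m(A)$ in the $\sigma$ topology — and here the uniform bound $\|\Sigma_m(A)\|\le\|A\|$, together with a density argument on the polynomials, is what makes the Krein--Smulyan step go through.
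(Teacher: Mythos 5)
There is a genuine gap at the crux of your argument, in step (ii). Knowing that each $\psi\circ\Sigma_m$ is $\sigma(\fL_n,E)$-continuous (it is a finite linear combination of the $\phi_w$, so this part is fine), that the family is uniformly bounded, and that $\psi(A)=\lim_m\psi(\Sigma_m(A))$ for every $A$, does \emph{not} imply that $\psi$ is $\sigma(\fL_n,E)$-continuous. Krein--Smulyan reduces the question to the closed unit ball, but a bounded pointwise limit of continuous functions on a compact set need not be continuous. Concretely, on $\lone=(c_0)^*$ the functionals $\psi_m(x)=\sum_{n\le m}x_n$ lie in $c_0$, have norm $1$, and converge pointwise on $\lone$ to $\psi(x)=\sum_n x_n$, which is the element $(1,1,1,\dots)\in\linf\setminus c_0$ and is not weak-$*$ continuous; one can even arrange the $\psi_m$ as Ces\`aro-type means. (Your step (i), the claim that $\Sigma_m(A)\to A$ in $\sigma(\fL_n,E)$ --- which does hold, since every $\sigma$-cluster point of this bounded sequence has the same Fourier coefficients as $A$ by Corollary~\ref{C:Fourier} --- and your final ``no two preduals are comparable'' argument are all fine; none of them rescues the pointwise-limit step.)

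What is needed is convergence of $\psi\circ\Sigma_m$ to $\psi$ in \emph{norm}, for then $\psi$ lies in the norm closure of $\spn\{\phi_w\}$ inside the norm-closed subspace $E\subseteq\fL_n^*$. The paper gets this directly and without Ces\`aro means: every weak-$*$ continuous functional on $\fL_n$ is a norm limit of finite sums of vector functionals $[xy^*]$ with $x,y$ in the algebraic span of $\{\xi_w\}$, and each $[\xi_v\xi_w^*]$ equals a functional of the form $[\xi_\mt\eta^*]$ with $\eta$ a basis vector or $0$, hence is a multiple of some $\phi_{w'}$. Thus $\spn\{\phi_w\}$ is already norm dense in $\fL_{n*}$, and Corollary~\ref{C:Fourier} finishes the proof. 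Your route can be repaired by proving $\|\psi\circ\Sigma_m-\psi\|\to 0$ (e.g.\ by writing $\Sigma_m$ as an average of the gauge automorphisms against the Fej\'er kernel and using norm continuity of the translates of $\psi$), but that requires essentially the same information about $\fL_{n*}$ as the direct density argument, so the detour buys nothing.
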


\begin{proof}
Recall that the standard predual $\fL_{n*}$ of $\fL_n$ consists of the weak-$*$ continuous linear
functionals of the form $[xy^*]$.
Clearly taking $x$ and $y$ to be in the algebraic span of $\{\xi_w : w \in \Fn\}$
is norm dense in the predual.
However, $[\xi_v\xi_w^*] = [\xi_\mt(L_v^*\xi_w)^*]$.
So the span of the functionals $[\xi_\mt\xi_w^*]$ is norm dense in the predual.
Since $ [\xi_\mt\xi_w^*] = \phi_w$ is universally weak-$*$ continuous
by Corollary~\ref{C:Fourier}, it follows that the standard predual is universally
weak-$*$ continuous.

No two preduals are comparable; so it follows that $\fL_{n*}$ is the strongly unique
predual of $\fL_n$.
\end{proof}

To deal with the case of a general free semigroup algebra, we require a simple lemma.
The dual space of an operator algebra $\fA$ is a bimodule over $\fA$ with the
natural action $(A\phi B)(T) = \phi(BTA)$.

%%%%%%%%%%%%%%%%%%%
\begin{lem}\label{L:proj_functional}
Let $P$ be an orthogonal projection in an operator algebra $\fA$.
Then for $\phi\in\fA^*$,
\[  \|\phi\|^2 \ge \| P\phi\|^2 + \| P^\perp \phi \|^2 .\]
\end{lem}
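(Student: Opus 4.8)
The plan is to reduce the inequality to a statement about a single vector state and then a $2\times 2$ matrix computation. First I would observe that it suffices to treat the case where $\fA$ acts on a Hilbert space $\H$ (every operator algebra does), since the module action $(P\phi P')(T) = \phi(P' T P)$ and the norm are intrinsic. The key reduction is this: given $\varepsilon>0$, choose a unit vector $\eta$ witnessing the norm of $\phi$ up to $\varepsilon$ when integrated against a suitable operator; more precisely, by the Hahn--Banach / Arveson-type description of functionals, pick a contraction $T\in\fA$ with $|\phi(T)|$ close to $\|\phi\|$. Then decompose $T$ using $P$ and $P^\perp = I - P$: write $T = PT + P^\perp T$ so that $\phi(T) = (P\phi)(T) + (P^\perp\phi)(T)$, where I am using that $(P\phi)(T) = \phi(TP)$ — wait, I need to be careful about which side $P$ acts; with the convention $(A\phi B)(S) = \phi(BSA)$, $P\phi$ means $B = I$, $A = P$, so $(P\phi)(S) = \phi(SP)$. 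So instead I would split on the right: $\phi(T) = \phi((P + P^\perp)T)$ only helps if $P$ multiplies on the appropriate side, so I would set $T' = TP + TP^\perp$ trivially and note $(P\phi)(S) = \phi(SP)$, giving $\phi(TP) = (P\phi)(T)$ and $\phi(TP^\perp) = (P^\perp\phi)(T)$.

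The heart of the argument is then the following: choose unit vectors and scalars so that $(P\phi)(T)$ and $(P^\perp\phi)(T)$ are simultaneously nearly maximized using \emph{orthogonal} pieces of the operator $T$. Concretely, pick contractions $T_1, T_2 \in \fA$ with $(P\phi)(T_1) \approx \|P\phi\|$ and $(P^\perp\phi)(T_2) \approx \|P^\perp\phi\|$, and consider $T = T_1 P + T_2 P^\perp$. Since $P, P^\perp$ have orthogonal ranges, $\|T\|^2 = \|TP\|^2 \oplus \|TP^\perp\|^2$ bound gives $\|T\| = \max(\|T_1 P\|, \|T_2 P^\perp\|) \le 1$ — but this is a max, not a sum, which only yields $\|\phi\| \ge \|P\phi\| + \|P^\perp\phi\|$ divided by $\sqrt 2$ after optimizing phases, not the Pythagorean bound. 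To get the sharp $\|\phi\|^2 \ge \|P\phi\|^2 + \|P^\perp\phi\|^2$, I would instead scale: take $T = \alpha T_1 P + \beta T_2 P^\perp$ with $|\alpha|^2 + |\beta|^2 = 1$ chosen later, so $\|T\| \le \max(|\alpha|,|\beta|) \le 1$; then $|\phi(T)| \approx |\alpha|\,\|P\phi\| + |\beta|\,\|P^\perp\phi\|$ after adjusting phases of $\alpha,\beta$, and maximizing $|\alpha| a + |\beta| b$ over $|\alpha|^2+|\beta|^2 \le 1$ by Cauchy--Schwarz gives exactly $\sqrt{a^2+b^2}$. Hence $\|\phi\| \ge \sqrt{\|P\phi\|^2 + \|P^\perp\phi\|^2} - O(\varepsilon)$, and letting $\varepsilon \to 0$ finishes it.

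The main obstacle I anticipate is the orthogonality bookkeeping: one must verify that $\|\alpha T_1 P + \beta T_2 P^\perp\| \le \max(|\alpha|, |\beta|)$ genuinely holds. This follows because for any $\xi \in \H$, writing $\xi = P\xi + P^\perp\xi$, we get $(\alpha T_1 P + \beta T_2 P^\perp)\xi = \alpha T_1(P\xi) + \beta T_2(P^\perp\xi)$, and $\|\alpha T_1 (P\xi)\| \le |\alpha|\,\|P\xi\|$, $\|\beta T_2(P^\perp\xi)\| \le |\beta|\,\|P^\perp\xi\|$; these two vectors need not be orthogonal, so I actually only get $\|T\xi\| \le |\alpha|\|P\xi\| + |\beta|\|P^\perp\xi\| \le \sqrt{|\alpha|^2+|\beta|^2}\sqrt{\|P\xi\|^2 + \|P^\perp\xi\|^2} = \|\xi\|$ when $|\alpha|^2 + |\beta|^2 = 1$. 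So the clean bound is $\|T\| \le 1$ directly, with no max needed — this is the crucial point. Then with phases absorbed into $T_1, T_2$ so that $\phi(T_1 P), \phi(T_2 P^\perp) \ge 0$ and $\alpha,\beta \ge 0$, one has $\phi(T) = \alpha\,\phi(T_1 P) + \beta\,\phi(T_2 P^\perp) \to \alpha\|P\phi\| + \beta\|P^\perp\phi\|$, and choosing $(\alpha,\beta)$ proportional to $(\|P\phi\|, \|P^\perp\phi\|)$ gives the desired inequality in the limit $\varepsilon\to 0$.
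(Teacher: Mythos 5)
Your argument is correct and is essentially the paper's proof: you combine near-norming contractions supported on $P$ and $P^\perp$ (your $T_1P$ and $T_2P^\perp$ are the paper's $A=AP$ and $B=BP^\perp$) with Pythagorean weights $(\alpha,\beta)=(\cos\theta,\sin\theta)$, check that the combination is still a contraction, and optimize by Cauchy--Schwarz. The only cosmetic difference is that the paper verifies $\|T\|\le 1$ via the $\mathrm{C}^*$-identity $\|T\|^2=\|TT^*\|$ together with $AB^*=0$, whereas you use a direct vector estimate.
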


\begin{proof}
Find $A=AP$ and $B=BP^\perp$ in $\fA$ of norm 1 so that $\phi(A)$ and $\phi(B)$ are real,
and we have the approximations
\[ \phi(A)= (P\phi)(A) \approx \|P\phi\|   \qand \phi(B) = P^\perp \phi(B) \approx \|P^\perp \|  . \]
Consider $T=\cos \theta A+\sin\theta B$.
Note that
\[ \|T\|^2=\|TT^*\| = \|\cos^2AA^*+\sin^2 BB^*\| \le 1 .\]
Compute
\[
 \phi(T) = \cos\theta\phi(A)+\sin\theta\phi(B)
 = ( \cos\theta,\sin\theta)\cdot( \phi(A), \phi(B)).
\]
Choosing $\theta$ so $( \cos\theta,\sin\theta)$ is parallel to $( \phi(A), \phi(B))$,
we obtain
\[ \|\phi\|^2 \ge  |\phi(T)|^2 = \phi(A)^2+\phi(B)^2 \approx \| P\phi\|^2 + \| P^\perp \phi \|^2 .\qedhere \]
\end{proof}

Now the general free semigroup algebra case follows from the structure theory of these algebras.

%%%%%%%%%%%%%%%%%%%
\begin{thm} \label{T:FSG_unique_predual}
Every free semigroup algebra $\fS$ has a strongly unique predual.
\end{thm}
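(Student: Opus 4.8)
The plan is to invoke the Structure Theorem for free semigroup algebras from \cite{DKP}, together with the result of Effros, Ozawa and Ruan \cite{EOR} on W*TROs and Theorem~\ref{T:Ln_unique_predual}. Recall that the Structure Theorem says that either $\fS$ is (completely isometrically and weak-$*$ homeomorphically) isomorphic to $\fL_n$ for some $n$ (the "analytic" or "type $\rL$" case), or else there is a nonzero "structure projection" $P$ in $\fS$ which commutes with $\fS$, which lies in the von Neumann algebra $\fW$ generated by $\fS$, for which $P\fS P = P\fW P$ is a von Neumann algebra, and for which $(I-P)\fS$ is analytic in the sense that $(I-P)\fS|_{(I-P)\H}$ looks like an ampliation of $\fL_n$. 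More precisely, one has a $2\times 2$ block decomposition relative to $P$ in which $P^\perp\fS P = 0$, the $(1,1)$ corner is a von Neumann algebra, the $(2,2)$ corner is (an ampliation of) $\fL_n$, and the $(2,1)$ corner is a weak-$*$ closed subspace. The case $n=1$ of the analytic part is $H^\infty$ (Ando), $n\ge 2$ is Theorem~\ref{T:Ln_unique_predual}, and a von Neumann algebra has a unique predual by Sakai.

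First I would dispose of the purely analytic and purely von Neumann cases, which are already handled. In the general case, I would use Lemma~\ref{L:proj_functional} applied to the central structure projection $P$: for any predual $E$ of $\fS$ and any $\phi\in\fS^*$, the inequality $\|\phi\|^2\ge\|P\phi\|^2+\|P^\perp\phi\|^2$ (and its reflection for right multiplication) lets one split an arbitrary functional into pieces supported on the four corners of the block decomposition, and control their norms. The point of the lemma is that this splitting is "automatic" — it does not reference $E$ — so a functional is in $C(E)$ (in the sense of Godefroy--Talagrand, as recalled in Section~\ref{S:back}) if and only if each of its corner components is. Thus it suffices to show that each corner contributes only universally weak-$*$ continuous functionals, i.e.\ that the standard predual of each corner consists of functionals lying in every predual of $\fS$.

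For the $(1,1)$ corner $P\fS P$, which is a von Neumann algebra, Sakai's characterization (stated at the end of Section~\ref{S:back}) identifies its predual via the sequential condition on orthogonal projections summing strongly to the identity; since such sums are genuine weak-$*$ limits in \emph{any} predual of $\fS$ and the condition is a universally-weak-$*$-Cauchy-series condition, those functionals lie in $C(E)$. For the $(2,2)$ corner, an ampliation of $\fL_n$, I would reduce to Theorem~\ref{T:Ln_unique_predual} and Corollary~\ref{C:Fourier}: the Fourier-coefficient functionals $\phi_w$ are universally weak-$*$ continuous and their span is norm-dense in $\fL_{n*}$, and ampliation does not destroy this. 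The $(2,1)$ corner is a W*TRO (a corner $P^\perp\fW P$ of the von Neumann algebra $\fW$), so \cite{EOR} gives it a unique predual, and the same universal-continuity argument applies to its standard predual. Assembling these, $C(E)$ equals the standard predual $\fS_*$ for every predual $E$, which forces $\fS_*=E$; hence $\fS$ has a strongly unique predual. The main obstacle I anticipate is the bookkeeping in the reduction: making precise that Lemma~\ref{L:proj_functional} (used on both sides of $P$) genuinely decomposes $C(E)$ as a direct sum over the corners, and verifying that the standard predual of an \emph{ampliation} of $\fL_n$ is still universally weak-$*$ continuous — the latter should follow since the ampliation's predual is spanned by compressions of the $\phi_w$ to the summands, each of which is universally weak-$*$ continuous by Corollary~\ref{C:Fourier}, but one must check the weak-$*$ topologies match up across the ampliation.
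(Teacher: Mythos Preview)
Your invocation of the Structure Theorem and the corner-by-corner strategy is the right outline, but there is a genuine gap, and also a factual slip. First the slip: the structure projection $P$ is \emph{not} central in $\fS$ and does not commute with $\fS$; the decomposition is only triangular, with $P^\perp\H$ invariant for $\fS$ so that $P\fS P^\perp = 0$, while the other off-diagonal corner $P^\perp\fS P$ is typically nonzero (it lies inside $\fJ = \fW P$). You seem to work with the correct triangular picture later anyway, so this is mainly a labeling error, but it does mean your ``four-corner'' splitting via Lemma~\ref{L:proj_functional} on both sides cannot proceed as symmetrically as you suggest.

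The substantive gap is the step you omit entirely: showing that $\fJ$ is \emph{universally} weak-$*$ closed in $\fS$. Knowing that each corner has a unique predual as an abstract Banach space is of no use until you know that an arbitrary predual $E$ of $\fS$ respects the decomposition --- i.e.\ that $E$ splits as $PE \oplus P^\perp E$ and that these pieces are genuine preduals of $\fS P$ and $\fS P^\perp$. Lemma~\ref{L:proj_functional} gives only the inequality $\|\phi\|^2 \ge \|P\phi\|^2 + \|P^\perp\phi\|^2$; it does \emph{not} by itself yield $P\phi \in E$ when $\phi \in E$. Likewise, your $C(E)$ argument for the von Neumann corner tacitly assumes that bounded \sot-convergent sums of orthogonal projections converge in $\sigma(\fS,E)$ to the same limit, which is exactly what is in question. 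The paper fills this gap by proving directly, via an intersection-of-balls computation in the spirit of Lemma~\ref{P:ideal_C_w}, that $\ol{b_1(\fJ)} = \bigcap_{\lambda\in\bC} \ol{b_{\sqrt{1+|\lambda|^2}}(\lambda P^\perp)}$ is universally weak-$*$ compact, whence $\fJ$ is universally weak-$*$ closed by Krein--Smulyan. Only then can one identify $E_0 = \{\phi\in E : \phi|_\fJ = 0\}$ as a predual of $\fS/\fJ \simeq \fL_n$ (so Theorem~\ref{T:Ln_unique_predual} applies) and $E/E_0$ as a predual of the W*TRO $\fJ$ (so \cite{EOR} applies), and only then does Lemma~\ref{L:proj_functional} finish the job of forcing $P^\perp\phi \in E_0$ for every $\phi\in E$. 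A minor further point: $\fS|_{P^\perp\H}$ is completely isometrically and weak-$*$ homeomorphically isomorphic to $\fL_n$ itself, not an ampliation, so no additional reduction is needed there.
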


\begin{proof}
We invoke the Structure Theorem for free semigroup algebras \cite{DKP}.
If $\fS$ is a von Neumann algebra, then the result follows from Sakai's Theorem \cite{Sakai}.
Otherwise, the \wot-closed ideal $\fS_0$ generated by $\{S_1,\dots,S_n\}$ is proper.
Let $\fJ = \bigcap_{k\ge1} \fS_0^k$.
This is a \wot-closed ideal of $\fS$, and is a left ideal in
the von Neumann algebra $\fW$ generated by $\{S_1,\dots,S_n\}$.
There is a projection $P\in\fS$ so that $\fJ = \fW P$, $P^\perp\H$ is invariant
for $\fS$, and $\fS|_{P^\perp\H}$ is completely isometrically isomorphic and weak-$*$
homeomorphic to $\fL_n$.

Now, define
$C_{P^\perp}=\bigcap_{\lambda\in\bC}\ol{B_{\sqrt{1+|\lambda|^2}}(\lambda P^\perp)}.$
We claim $C_{P^\perp}= \ol{b_1(\fJ)}$.
By the calculation of $C_\emptyset$ in $\fL_n$ in Lemma~\ref{P:ideal_C_w},
$P^\perp C_{P^\perp} P^\perp = C_\mt = \{0\}$.
So $C_{P^\perp} \subset \fJ \cap \ol B_1(\fL_n) = b_1(\fJ)$.
Conversely,
\begin{align*}
 \left\| \begin{bmatrix} A & 0 \\  B & \lambda I \end{bmatrix} \right\|^2 &=
 \left\| \begin{bmatrix} A & 0 \\  B & \lambda I \end{bmatrix}
 \begin{bmatrix} A^* & B^* \\  0 & \ol\lambda I \end{bmatrix} \right\| \\ &\le
 \left\| \begin{bmatrix} A & 0 \\ B & 0 \end{bmatrix}
 \begin{bmatrix} A^* & B^* \\  0 & 0 \end{bmatrix} \right\| +
 \left\| \begin{bmatrix} 0 & 0 \\  0 & |\lambda|^2 I \end{bmatrix} \right\| \\&=
 \left\| \begin{bmatrix} A & 0 \\ B & 0 \end{bmatrix} \right\|^2
 + |\lambda|^2 .
\end{align*}
Thus, we see that $C_{P^\perp}$ contains $\ol{b_1(\fJ)}$.
Whence $C_{P^\perp} = \ol{b_1(\fJ)}$.

Now $C_{P^\perp}$ is universally weak-$*$ compact.
By the Krein-Smulyan Theorem, $\fJ = \spn C_{P^\perp}$ is universally weak-closed.
Note, $\fJ$ is a W*TRO, and therefore has a strongly unique predual \cite{EOR}.

Let $E$ be a predual of $\fS$.
Then the predual of $\fS/\fJ$ is
\[ E_0 = \{ \phi\in E : \phi|_\fJ = 0 \} .\]
Since  $\fS/\fJ$ is isomorphic to $\fL_n$,  Theorem~\ref{T:Ln_unique_predual} 
implies that $E_0$ coincides with the weak-$*$ continuous functionals on $\fL_n$.
Because the isomorphism of $\fS|{P^\perp\H}$ to $\fL_n$
is a weak-$*$ homeomorphism, $E_0$ coincide with the weak-$*$ continuous functionals
on $\fS|_{P^\perp \H}$.

The predual of $\fJ$ is $E/E_0$.
If $\phi\in E$, then $\|\phi+E_0\| = \|\phi|_\fJ\| = \|P\phi\|$.
Clearly every functional $\psi\in E_0$ has $\psi = P^\perp\psi$.
Therefore by Lemma \ref{L:proj_functional}, we see that
\[ \|P\phi\|^2 = \|\phi+E_0\|^2 \ge \|P\phi\|^2 + \dist(P^\perp\phi, E_0)^2 .\]
Hence $P^\perp\phi\in E_0$ and so $P\phi\in E$.
It follows that $E_0 = P^\perp E$ and $E/E_0 \simeq PE$.
Hence $\fS P^\perp = (PE)^\perp$ is also $\sigma(\fS_n, E)$ closed.

As $PE$ is the unique predual of $\fS P$ and $P^\perp E$ is the unique predual
of $\fS P^\perp$, both consisting of the weak-$*$ continuous functionals,
we deduce that $E$ necessarily coincides with the weak-$*$ continuous functionals on $\fS$.
So there is a strongly unique predual.
\end{proof}

%%%%%%%%%%%%%%%%%%%%%%%%%%%%%%%%%%%%%%
\section{Operator algebras with many compact operators}\label{S:Compacts}

Suppose that $\fA$ is a local weak-$*$ closed unital sub-algebra of $\B(\H)$,
meaning that $\fA\cap\fK$ is weak-$*$ dense in $\fA$.
We will provide a new proof that $\fA$ has a unique predual using an argument modelled on
Ando's argument \cite{Ando} that $H^\infty$ has a unique predual.

Observe that the weak-$*$ density means that $(\fA \cap \fK)^\perp = \fA_\perp$
in the space $\fS_1$ of  trace class operators.
Thus $(\fA \cap \fK)^* \simeq \fS_1/\fA_\perp \simeq \fA_*$;
and hence $(\fA \cap \fK)^{**} = \fA$.
Thus there is a canonical contractive projection $\P$ of the triple dual, $\fA^*$, onto the
dual space $\fA_*$ given by restriction to $\fA \cap \fK$, 
and considered as a subspace of $\fA^*$.
For $\phi\in\fA^*$, we will write $\P \phi =: \phi_a$ and $(\id - \P)\phi =: \phi_s$.

In fact by \cite{DP}, $\fA \cap \fK$ is an M-ideal in $\fA$.
Therefore $\P$ is an L-projection, meaning that $\|\phi\| = \|\phi_a\|+\|\phi_s\|$.
We will not require this fact in our proof.

By Goldstine's Theorem, the unit ball of $\fA \cap \fK$ is weak* dense in the unit ball of $\fA$.
Since the closed convex sets in \wot\  and \sot$*$ topologies coincide,
the ball of compact operators is also \sot$*$ dense in the ball of $\fA$.
In particular, there is a net (a sequence when $\H$ is separable)
$K_n \in \fA \cap \fK$ with $\|K_n\| \le 1$ and $\sotstarlim K_n = I$.
Evidently, this is a contractive approximate identity for $\fA \cap \fK$.

We require a somewhat better approximate identity.
It is shown in \cite{DP} that the existence of a bounded one-sided approximate identity
implies the existence of a contractive two-sided approximate identity $C_n$ with
the additional property that $\limsup \| I-C_n \| \le 1$.
What we require here is similar, and the argument follows from tricks using
the Riesz functional calculus.

For various interesting examples such as $\B(\H)$ and atomic CSL algebras
(see the end of this section for definitions), the compact operators in the algebra have
a bounded approximate identity consisting of finite rank projections $\{P_n\}$.
In this case, one may take $S_n=C_n=P_n$ in Lemma~\ref{L:GoodApproxIDs},
 and avoid all of the tricky calculations.

%%%%%%%%%%%%%%%%%%%
\begin{lem}\label{L:GoodApproxIDs}
Let $\fA$ be an operator algebra with a contractive approximate identity $\{K_n\}$.
Then $\fA$ has a contractive approximate identity $\{S_n\}$
and a bounded approximate identity $\{ I-C_n \}$ so that
\[
 \lim_{n\to\infty} \big\| \begin{bmatrix} S_n &  C_n\end{bmatrix} \big\| = 1
 = \lim_{n\to\infty} \bigg\| \begin{bmatrix} S_n \\ C_n\end{bmatrix} \bigg\| .
\]
\end{lem}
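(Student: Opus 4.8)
The plan is to build $S_n$ and $C_n$ out of the given approximate identity $\{K_n\}$ via the Riesz functional calculus, exploiting the fact that spectral values of $K_n$ that are bad for our purposes can be moved by applying a holomorphic function that fixes a neighborhood of $1$. First I would recall that since $\{K_n\}$ is a contractive approximate identity, for each fixed $a\in\fA$ we have $\|a-aK_n\|\to0$ and $\|a-K_na\|\to0$, and $\|K_n\|\le1$, so $\sigma(K_n)\subset\ol{b_1(0)}\subset\bC$. The idea is to replace $K_n$ by $f(K_n)$ for suitable holomorphic $f$: any $f$ defined and holomorphic on a neighborhood of $\ol{b_1(0)}$ with $f(1)=1$ will again give $f(K_n)$ as an approximate identity, because $a-af(K_n)=g(K_n)$-type expressions built from $a-aK_n$ vanish in the limit (more carefully: $af(K_n)-a = a(f(K_n)-1)$, and $f(z)-1$ is divisible by $z-1$ in the holomorphic functional calculus on a suitable domain only after one controls behavior near $1$; the clean way is to note that on the commutant generated by $a$ and $K_n$, $af(K_n)\to f(1)a=a$ follows from $aK_n\to a$ plus polynomial/uniform approximation of $f$).

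The key step is the choice of $f$. I want $S_n=f(K_n)$ to be a contraction with $\|S_n\|\to$ something controlled, $C_n=g(K_n)$ with $I-C_n$ bounded and an approximate identity, and the crucial norm estimates
\[
 \big\| \begin{bmatrix} S_n & C_n\end{bmatrix} \big\|^2 = \big\| S_nS_n^*+C_nC_n^* \big\| = \big\| (|f|^2+|g|^2)(K_n) \big\| \to 1,
\]
using that $K_n$ is not assumed normal — wait, that is the main obstacle, see below. Modulo normality, $\|(|f|^2+|g|^2)(K_n)\|\le \sup_{|z|\le1}(|f(z)|^2+|g(z)|^2)$, so I would choose $f,g$ holomorphic near $\ol{b_1(0)}$ with $f(1)=1$, $g(1)=0$ (so that $I-C_n=I-g(K_n)$ is an approximate identity since $g(1)=0$ gives $(I-g(K_n))a\to a$), and $|f(z)|^2+|g(z)|^2\le1$ on $\ol b_1(0)$ with equality approached near $z=1$. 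A concrete candidate: on the disk parametrize so that $f$ and $g$ behave like $\cos$ and $\sin$ of something that vanishes at $1$; e.g. $f(z)=\cos(c(1-z))$ and $g(z)=\sin(c(1-z))$ do not stay bounded by $1$ on the whole disk, so instead I would take $f(z)=\tfrac12(1+z)$-style shrinkage composed with a map sending $\ol b_1(0)$ into a lens-shaped region on which an explicit $(\cos,\sin)$-pair stays in the unit ball. The same pair automatically handles the column norm $\|[S_n;C_n]\|^2=\|S_n^*S_n+C_n^*C_n\|=\|(|f|^2+|g|^2)(K_n)\|$ by the same bound.

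The hard part will be exactly the non-normality of $K_n$: $\|h(K_n)\|$ is \emph{not} bounded by $\sup_{\sigma(K_n)}|h|$ in general. The remedy, which is the real content borrowed from \cite{DP}, is to not work with a fixed $K_n$ but to first pass to an approximate identity whose relevant functional-calculus expressions are controlled — concretely, replace $K_n$ by $\Phi(K_n)$ where $\Phi$ is chosen so that $\|\Phi(K_n)\|\le1$ \emph{and} $\Phi(K_n)$ has numerical range (or spectrum together with a power-boundedness estimate) confined near $[0,1]$ or near $1$, using that $(K_n)$ already satisfies $\limsup\|K_n\|\le1$. Once the spectra are squeezed into a thin region $\Omega$ near $1$ and one has a uniform bound $\|h(K_n)\|\le M\sup_\Omega|h|$ (available from a Cauchy-integral estimate once the resolvent of $K_n$ is uniformly bounded off a neighborhood of $\Omega$, which is where $\limsup\|I-C_n\|\le1$ from the cited construction enters), the choice of $f,g$ in the previous paragraph — now only required to satisfy the inequalities on the \emph{small} region $\Omega$ rather than all of $\ol b_1(0)$ — gives $|f|^2+|g|^2$ as close to $1$ as we like, and letting this tolerance go to $0$ along the net produces the claimed limits $\lim\|[S_n\ C_n]\|=1=\lim\|[S_n;C_n]\|$. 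I would organize the write-up as: (i) reduce to an approximate identity with spectra near $1$ and a resolvent bound; (ii) write down the explicit holomorphic $f,g$ on that region; (iii) verify $S_n=f(K_n)$, $I-C_n=I-g(K_n)$ are (approximate) identities; (iv) estimate the two $2\times1$ operator-matrix norms via the functional calculus bound and conclude.
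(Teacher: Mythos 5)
You have the right geometric picture (a $\cos/\sin$ pair composed with a conformal map of the disk onto a thin lens through $0$ and $1$), but the step you flag as ``the hard part'' --- non-normality of $K_n$ --- is resolved in the paper by a tool you never invoke: von Neumann's inequality and its matrix version. For \emph{any} contraction $K$ and any $f$ in the disk algebra, $\|f(K)\|\le \sup_{|z|\le 1}|f(z)|$, and likewise
$\big\| \begin{bmatrix} f(K) & g(K)\end{bmatrix}\big\| \le \sup_{|z|\le 1}\big\|\begin{bmatrix} f(z) & g(z)\end{bmatrix}\big\| = \sup_{|z|\le 1}\big(|f(z)|^2+|g(z)|^2\big)^{1/2}$,
with the same bound for the column. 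No normality, no spectral localization, and no resolvent estimate is needed; one simply chooses $f_\ep$ mapping all of $\ol{\bD}$ (not just the spectrum) into the lens $U_\ep$ of height $\ep$, sets $S_n=\sin(\tfrac\pi2 f_\ep(K_n))$, $C_n=\cos(\tfrac\pi2 f_\ep(K_n))$, and uses $|\sin w|^2+|\cos w|^2 = 1+2\sinh^2(\operatorname{Im} w)$ to get the bound $\big(1+2\sinh^2(\pi\ep/2)\big)^{1/2}$, then diagonalizes in $\ep$. Your identity $\|[S_n\ C_n]\|^2 = \|(|f|^2+|g|^2)(K_n)\|$ is not even meaningful for non-normal $K_n$, since $|f|^2+|g|^2$ is not holomorphic; the inequality above is the correct replacement.

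Your proposed remedy --- first passing to an approximate identity with spectrum squeezed near $1$ and a uniform resolvent bound off a neighborhood of that region, so that a Cauchy-integral estimate controls $\|h(K_n)\|$ --- is a genuine gap, because it is not achievable in general. A contractive approximate identity can consist of finite-rank projections, whose spectra are $\{0,1\}$ and cannot be moved near $1$ by any holomorphic recalibration that keeps $\Phi(K_n)$ in the non-unital ideal (one needs $\Phi(0)=0$); and nothing in the hypotheses gives a resolvent bound that is uniform in $n$, which is what a Riesz--functional--calculus estimate would require. So step (i) of your outline cannot be carried out, and without it your norm estimates in step (iv) have no justification. Von Neumann's inequality is the missing ingredient that makes the rest of your construction go through essentially as you describe. (A secondary, fixable looseness: to see that $f(K_n)$ is again an approximate identity for $f\in\mathrm{A}(\bD)$ with $f(0)=0$, $f(1)=1$, note that each $\{K_n^i\}$ is one, hence so is $\{p(K_n)\}$ for polynomials with $p(0)=0$, $p(1)=1$, and the passage to uniform limits again uses the von Neumann bound $\|f(K_n)-p(K_n)\|\le\|f-p\|_\infty$ uniformly in $n$.)
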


\begin{proof}
It is easy to see that for any fixed $i$, $\{K_n^i\}$ is a contractive approximate identity.
So if $p$ is a polynomial $p$ with $p(0)=0$ and $p(1) = 1$, then
$\{p(K_n)\}$ is an approximate identity.  By von Neumann's inequality,
it is bounded by $\|p\|_\infty = \sup_{|z|\le1}|p(z)|$.
Now if $f \in \AD$ satisfies $f(0)=0$ and $f(1)=1$, then it can
be uniformly approximated by such polynomials.  So again von Neumann's inequality
shows that $\{f(K_n\})$ is an approximate identity bounded by $\|f\|_\infty$.

Now $\sin(\pi z/2)$ is analytic, and takes $(-1,1)$ into itself.  It is easy to check that
there is a convex open set $U$ containing $(-1,1)$ on which $| \sin(\pi z/2) | < 1$.
%Indeed, we may take $U = \{ x+iy : |x| < 1, |y| < h(x) \}$ where
%\[
% h(x) = \frac{2}{\pi} \log \Big( \cos\big(\frac{\pi x} 2\big)
%          + \Big(1 + \cos^2\big( \frac{\pi x} 2 \big)\Big)^{1/2} \, \Big) .
%\]
Let $U_\ep = U \cap \{ x+iy :  |y| < \ep \}$; and
let $f_\ep$ be the conformal map of $\bD$ onto $U_\ep$
such that $f_\ep(0) = 0$ and $f_\ep(1)=1$.
We define
\[ g_\ep(z) = \sin \big( \frac\pi 2 f_\ep(z) \big) \qand  h_\ep(z) = \cos\big( \frac\pi 2 f_\ep(z)\big) .\]

Define $S_n = g_\ep(K_n)$  and $C_n = h_\ep(K_n)$.
Then $\{S_n\}$ is a contractive approximate identity,
and $\{I-C_n\}$ is a bounded approximate identity for $\fA$.
By the matrix version of von Neumann's inequality,
\[
 \big\| \! \begin{bmatrix} S_n & C_n\end{bmatrix} \! \big\| \le
 \big\| \! \begin{bmatrix} g_\ep & h_\ep\end{bmatrix} \! \big\| =
 \big\| \sqrt{ |g_\ep|^2 + |h_\ep|^2 } \big\| \le
 \Big( 1 + 2 \sinh^2 \big( \frac{\pi \ep}2 \big) \Big)^{1/2} \!.
\]
Similarly,
\[
 \bigg\| \begin{bmatrix} S_n \\ C_n\end{bmatrix} \bigg\| \le
 \Big( 1 + 2 \sinh^2 \big( \frac{\pi \ep}2 \big) \Big)^{1/2} .
\]

Now use a diagonal argument to let $\ep$ go to $0$ slowly relative to $n$ so as
to still be an approximate identity and obtain the desired norm limit.
\end{proof}

We can now prove the uniqueness of preduals.

%%%%%%%%%%%%%%%%%%%
\begin{thm}\label{T:AlgebrasWithCompacts}
Let $\fA$ be a weak-$*$ closed unital subalgebra of $\B(\H)$ with a weak-$*$ dense
subalgebra of compact operators.  Then $\fA$ has a strongly unique predual.
\end{thm}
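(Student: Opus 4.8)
The plan is to prove that an arbitrary Banach-space predual $E$ of $\fA$ coincides, as a subspace of $\fA^*$, with the standard predual $\fA_*$ of weak-$*$ continuous functionals. Since no predual of a dual Banach space properly contains another (if $E_1\subseteq E_2$, then $\ol{b_1(\fA)}$ carries the same topology for $\sigma(\fA,E_1)$ and $\sigma(\fA,E_2)$, being $\sigma(\fA,E_2)$-compact and $\sigma(\fA,E_1)$-Hausdorff, and Krein--Smulyan forces $E_1=E_2$), it suffices to show $\fA_*\subseteq E$; this also gives strong uniqueness. For $\phi\in\fA^*$ write $\phi=\phi_a+\phi_s$ with $\phi_a=\P\phi\in\fA_*$ and $\phi_s=(\id-\P)\phi$, so that $\phi_s$ annihilates $\fA\cap\fK$.

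The engine is Lemma~\ref{L:GoodApproxIDs}: a contractive approximate identity $\{S_n\}$ and a bounded approximate identity $\{I-C_n\}$, all of whose terms lie in $\fA\cap\fK$, with $S_n\to I$ and $C_n\to 0$ $\ast$-strongly and
\[
 \lim_n\big\|\begin{bmatrix} S_n & C_n\end{bmatrix}\big\| = 1 = \lim_n\big\|\begin{bmatrix} S_n \\ C_n\end{bmatrix}\big\| .
\]
Two consequences hold for every $\phi\in\fA^*$. First, since $\phi_s$ kills $\fA\cap\fK$ while $S_n$ acts on the trace class norm-convergently to $I$, the right translates $S_n\phi$, defined by $(S_n\phi)(T)=\phi(TS_n)$, satisfy $\|S_n\phi-\phi_a\|\to 0$. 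Second, running the convexity computation of Lemma~\ref{L:proj_functional} with $S_n$ and $C_n$ in the roles of $P$ and $P^\perp$, the near-isometry in the display above replacing exact orthogonality, yields in the limit $\|\phi\|^2\ge\|\phi_a\|^2+\|\phi_s\|^2$ (a weak form of the L-projection property $\|\phi\|=\|\phi_a\|+\|\phi_s\|$).

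The crux is to show that if $\phi\in E$ then $\phi_a\in E$ (and, by symmetry, $\phi_s\in E$). Since $\phi_a=\lim_n S_n\phi$ in norm and $E$ is norm-closed in $\fA^*$, it is enough to know that translation by the compact element $S_n$ is $\sigma(\fA,E)$-continuous on $\fA$, i.e.\ $S_n\phi\in E$. The difficulty — which I expect to be the main obstacle, and which the near-isometric construction of Lemma~\ref{L:GoodApproxIDs} is built to overcome — is that $\sigma(\fA,E)$ need not a priori agree with the standard weak-$*$ topology, so this module-continuity cannot be quoted outright; it has to be extracted from the $\sigma(\fA,E)$-compactness of $\ol{b_1(\fA)}$ together with the quantitative estimates just recorded, in the spirit of Ando's handling of the singular part of a representing measure for $H^\infty$. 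Granting this, $E=\P E\oplus(\id-\P)E$ with $\P E=E\cap\fA_*$, and each map $\psi\mapsto S_n\psi S_n$ sends $E$ into $E\cap\fA_*$.

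Finally, $E\cap\fA_*$ still norms $\fA$: for $A\in\fA\cap\fK$ a norming $\phi\in E$ with $\|\phi\|\le1$ has $\phi(A)=\phi_a(A)$ because $\phi_s$ kills $A$, and $\phi_a\in E\cap\fA_*$ with $\|\phi_a\|\le1$ since $\P$ is contractive; for general $A\in\fA$ one uses $S_nAS_n\in\fA\cap\fK$ with $\|S_nAS_n\|\to\|A\|$ (the norm is weak-$*$ lower semicontinuous and $S_nAS_n\to A$ $\ast$-strongly) and transports the norming functionals for $S_nAS_n$ via $\psi\mapsto S_n\psi S_n$. Hence $E\cap\fA_*$ norms $\fA$, and its unit ball is compact in $\sigma(\fA,E\cap\fA_*)$, a topology coarser than $\sigma(\fA,E)$; so $E\cap\fA_*$ is a predual of $\fA$ contained in $\fA_*$, and therefore equals $\fA_*$. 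Thus $\fA_*\subseteq E$, so $E=\fA_*$, and $\fA$ has a strongly unique predual.
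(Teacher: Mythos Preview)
Your outline has the right architecture, but the step you flag as ``the crux'' and then explicitly punt on (``Granting this, \dots'') is exactly where the proof lives. You need that $S_n\phi\in E$ for $\phi\in E$, i.e.\ that right multiplication by a compact element of $\fA$ is $\sigma(\fA,E)$-continuous. You correctly observe that this cannot be quoted, and you gesture at extracting it from compactness of the ball plus the row/column estimates of Lemma~\ref{L:GoodApproxIDs}, but you never carry out that extraction. Everything downstream of ``Granting this'' depends on it. (Your invocation of Lemma~\ref{L:proj_functional} is also not quite right: $S_n$ and $C_n$ are not projections, so the computation there does not apply verbatim and would need to be redone with the near-isometric row/column bound in place of $P\oplus P^\perp$; you do not do this either.)

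The paper's proof sidesteps the question of whether $K\phi\in E$ entirely by working on the element side rather than the functional side. For $A$ in the unit ball, take a $\sigma(\fA,E)$-convergent subnet $C_\alpha A C_\alpha\to B$. The row/column estimate from Lemma~\ref{L:GoodApproxIDs} is used directly to show $\|S_n\pm B\|\le 1$ for every $n$ via
\[
 S_n \pm C_\alpha A C_\alpha = \begin{bmatrix} S_\alpha & C_\alpha\end{bmatrix}
 \begin{bmatrix} S_n & 0\\ 0 & \pm A\end{bmatrix}
 \begin{bmatrix} S_\alpha \\ C_\alpha\end{bmatrix}
 + (S_n - S_\alpha S_n S_\alpha),
\]
and since $E$ norms $\fA$ this passes to the $\sigma(\fA,E)$-limit. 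Letting $n\to\infty$ gives $\|I\pm B\|\le 1$, so $B=0$. Now $A - C_\alpha A C_\alpha = (I-C_\alpha)A + C_\alpha A(I-C_\alpha)$ is compact, so $\phi$ and $\phi_a$ agree on it; taking limits yields $\phi(A)=\phi_a(A)$ for every $A$, hence $\phi=\phi_a\in\fA_*$. This is both shorter than your route through ``$E\cap\fA_*$ norms $\fA$'' and, crucially, complete: it shows concretely how the near-isometric approximate identity converts $\sigma(\fA,E)$-compactness of the ball into the conclusion, which is precisely the step you left open.
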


\begin{proof}
By the remarks at the beginning of this section, $\fA\cap\fK$ has a contractive
approximate identity.  So by Lemma~\ref{L:GoodApproxIDs}, we obtain
approximate identities $\{S_n\}$ and $\{I-C_n\}$ for $\fA\cap\fK$ as described.
In particular, $S_n$ converges \sot$*$ to $I$ and $C_n$ converges \sot$*$ to $0$.

Let $E$ be a subspace of $\fA^*$ which norms $\fA$, and so that the closed unit ball
of $\fA$ is $\sigma(\fA,E)$ compact.
Fix $A \in \fA$ with $\|A\|=1$.
Then the sequence $C_n A C_n$ is bounded,
and converges \sot$*$ to $0$.
Since this is a bounded net, it has a subnet $C_\alpha A C_\alpha$
which converges in the $\sigma(\fA,E)$ topology to some element $B$
in the ball of $\fA$.
That is,
\[ \lim \phi \big( C_\alpha A C_\alpha \big) = \phi(B) \qforal \phi \in E .\]

We will show that $B=0$.
Fix $n\ge1$. Then for $\phi\in E$ with $\|\phi\| = 1$,
\begin{align*}
 \big| \phi(  S_n \pm B) \big| &=
 \lim_\alpha \big| \phi(  S_n \pm C_\alpha A C_\alpha ) \big| \\ &\le
 \lim_\alpha  \left\| \begin{bmatrix} S_\alpha & C_\alpha \end{bmatrix}
 \begin{bmatrix}  S_n & 0\\ 0&  \pm A \end{bmatrix}
 \begin{bmatrix} S_\alpha \\ C_\alpha \end{bmatrix} \right\|
 +\|S_n - S_\alpha S_n S_\alpha\| \\ &\le
 1 + \lim_\alpha   \|S_n - S_\alpha S_n S_\alpha\|  =1 .
\end{align*}
Since $E$ norms $\fA$, we conclude that $\| S_n \pm  B \| \le 1$.
Letting $n$ go to infinity, this converges \wot\ to $I \pm B$.
Hence $\| I \pm B \| \le 1$.
Therefore $B=0$.

Fix $\phi\in E$.  We have the decomposition $\phi=\phi_a + \phi_s$.
Note that $A -  C_\alpha A C_\alpha =(1-C_\alpha) A +C_\alpha A(1-C_\alpha)$ is compact
because $1-C_\alpha$ is compact, and this net converges \wot\ to $A$.
Hence
\begin{align*}
 \phi(A) &= \phi(A-B) = \lim \phi \big( A - C_\alpha A C_\alpha \big)\\
 &= \lim \phi_a \big( A - C_\alpha A C_\alpha \big)
 = \phi_a(A) .
\end{align*}
It follows that $\phi=\phi_a$.
This shows that $E$ is contained in $\fA_*$.
Since it separates points, $E=\fA_*$.
\end{proof}

A nest algebra is the set of all operators that are upper triangular
with respect to a fixed chain of invariant subspaces.
All nest algebras have a dense subalgebra of compact operators \cite{Erdos}.
More generally, a CSL algebra is a reflexive algebra of operators containing a masa.
The lattice of invariant subspaces is a sublattice of the projection lattice
of the masa, hence the name commutative subspace lattice (CSL).
The compact operators are weak-$*$ dense precisely when the lattice is
completely distributive \cite{LL}.
When the masa is atomic, one can find an approximate identity for $\fK$
consisting of finite rank projections in the masa.
So in particular, the compact operators in atomic CSL algebras are weak-$*$ dense.
See \cite{Dnest} for more details.  So we obtain:

%%%%%%%%%%%%%%%%%%%
\begin{cor}
Every completely distributative CSL algebra has a unique predual.
\end{cor}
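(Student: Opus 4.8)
The statement to prove is the final corollary: every completely distributive CSL algebra has a unique predual. The plan is to reduce this immediately to Theorem~\ref{T:AlgebrasWithCompacts}, which already establishes that any weak-$*$ closed unital subalgebra of $\B(\H)$ with a weak-$*$ dense subalgebra of compact operators has a (strongly) unique predual. So the only work is to verify that a completely distributive CSL algebra $\fA$ satisfies all the hypotheses of that theorem.

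First I would recall that a CSL algebra is by definition a reflexive operator algebra, hence weak-$*$ closed, and it contains a masa, hence in particular the identity operator, so it is unital and is a subalgebra of $\B(\H)$. That disposes of ``weak-$*$ closed unital subalgebra of $\B(\H)$.'' The one substantive point is the density of the compacts: I would cite Laurie--Longstaff \cite{LL}, which says precisely that for a CSL algebra the compact operators are weak-$*$ dense if and only if the underlying commutative subspace lattice is completely distributive. Since we are assuming complete distributivity, $\fA\cap\fK$ is weak-$*$ dense in $\fA$. Applying Theorem~\ref{T:AlgebrasWithCompacts} then gives that $\fA$ has a strongly unique predual, which in particular is a unique predual, completing the proof.

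I do not expect any real obstacle here, since all of the analytic content (the approximate identity constructions of Lemma~\ref{L:GoodApproxIDs} and the predual-uniqueness argument of Theorem~\ref{T:AlgebrasWithCompacts}) has already been carried out. The only thing to be careful about is citing the correct characterization of when the compacts are weak-$*$ dense in a CSL algebra (it is complete distributivity of the lattice, not mere distributivity), and noting — as the surrounding text already does — that nest algebras and atomic CSL algebras are special cases (Erdős's density theorem \cite{Erdos} handles nests, and for an atomic masa one has an approximate identity for $\fK$ consisting of finite-rank projections in the masa, so Lemma~\ref{L:GoodApproxIDs} applies with $S_n=C_n=P_n$ and the analytic tricks are unnecessary). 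The proof is therefore essentially a one-line invocation of the main theorem of the section together with the Laurie--Longstaff characterization.
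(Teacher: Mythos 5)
Your proof is correct and follows exactly the paper's intended argument: the corollary is an immediate consequence of Theorem~\ref{T:AlgebrasWithCompacts} once one cites the Laurie--Longstaff characterization \cite{LL} that the compacts in a CSL algebra are weak-$*$ dense precisely when the lattice is completely distributive. No gaps.
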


The algebra $L^\infty(0,1)$ is not completely distributive.  But it still has
a unique predual by Grothendieck's Theorem.  We do not know whether
every CSL algebra has a unique predual.

%%%%%%%%%%%%%%%  REFERENCES  %%%%%%%%%%%%%%%%%%

\end{document}